\newtheorem{theorem}{Theorem}[section]
\newtheorem{cor}{Corollary}[section]
\newtheorem{lem}{Lemma}[section]
\newtheorem{prop}{Proposition}[section]
\theoremstyle{definition}
\newtheorem{defn}{Definition}[section]
\newtheorem{rem}{Remark}[section]
\newtheorem*{theorem*}{Theorem}
\newtheorem{ex}{Example}[section]
\numberwithin{equation}{section}
\newcommand{\R}{\mathbb R}
\newcommand{\N}{\mathbb N}
\newcommand{\Z}{\mathbb Z}
\newcommand{\bigint}{\begin{picture}(10,10)
\put(-1,2){\line(1,0){10}}
\end{picture}\kern-14pt\int}
\def\Xint#1{\mathchoice
   {\XXint\displaystyle\textstyle{#1}}%
   {\XXint\textstyle\scriptstyle{#1}}%
   {\XXint\scriptstyle\scriptscriptstyle{#1}}%
   {\XXint\scriptscriptstyle\scriptscriptstyle{#1}}%
   \!\int}
\def\XXint#1#2#3{{\setbox0=\hbox{$#1{#2#3}{\int}$}
     \vcenter{\hbox{$#2#3$}}\kern-.5\wd0}}
\def\dashint{\Xint-}
\definecolor{blau}{rgb}{0.1,0.0,0.9}
\definecolor{violet}{rgb}{0.54, 0.17, 0.89}
\newcommand{\blue}{\color{blau}}
\newcommand{\kom}[1]{}
\renewcommand{\kom}[1]{{\bf \blue /#1/}}
\newcounter{komcounter}
\numberwithin{komcounter}{section}
\title{THE LIOUVILLE THEOREM FOR DISCRETE SYMMETRIC AVERAGING OPERATORS}
\author{Tomasz Adamowicz{\small$^1$}}
\address{T.A.: Institute of Mathematics, Polish Academy of Sciences,
\'Sniadeckich 8, Warsaw, 00-656, Poland.\/}
\email{tadamowi@impan.pl}
\author{Jos\'e G. Llorente{\small$^2$}}
\address{J.G.L.: Departamento de An\'alisis Matem\'atico y Matem\'atica Aplicada and Instituto de Matem\'atica Interdisciplinar (IMI). Facultad de Matem\'aticas, Universidad Complutense de Madrid. Plaza de Ciencias 3, 28040 Madrid, Spain.\/}
\email{josgon20@ucm.es}
\begin{document}


\footnotetext[1]{T. Adamowicz was supported by a grant of National Science Center, Poland (NCN),
 UMO-2017/25/B/ST1/01955.}
\footnotetext[2]{J. G. Llorente was partially supported by Ministerio de Econom\'ia y Competitividad grant MTM2017-85666-P.}
\begin{abstract}
We introduce averaging operators on lattices $\Z^d$ and study the Liouville property for functions satisfying mean value properties associated to such operators. This framework encloses  discrete harmonic, $p$-harmonic, $\infty$-harmonic and the so-called game $p$-harmonic functions. Our approach provides an elementary alternative proof of the Liouville Theorem for positive $p$-harmonic functions on $\Z^d$.
\newline
\newline \emph{Keywords}:  averaging operator, discrete harmonic function, graphs, Laplacian, lattice, strong Liouville theorem, $p$-harmonic, $p$-harmonious, $\infty$-Laplacian.
\newline
\newline
\emph{Mathematics Subject Classification (2020):} Primary: 31C20 ; Secondary: 35B53, 31C45.
\end{abstract}

\maketitle

\section{Introduction}  The interest in discrete  harmonic functions goes back to the nineteenth century, closely related to the study of electrical networks and random walks. Nowadays, Discrete Potential Theory is an active field, with connections and applications to different areas of pure and applied mathematics such as discretization of PDEs, stochastic games, machine learning and image processing, among others (see \cite{C, ETT, GP} and references therein). Hereafter we will restrict to a particular graph: the lattice $\Z^d$. Let $e_1,...,e_d$ be the canonical basis of $\R^d$ and, for notational convenience, set $e_{j+d} = -e_j$ for $j = 1,...,d$. For each $x\in \Z^d$ the points  $n_j(x) := x + e_j$, $j = 1,...,2d$ are the $2d$ neighbours of $x$. Then $u : \Z^d \to \R$ is \textit{harmonic} if
\begin{equation}\label{hgraph}
u(x) = \frac{1}{2d}\sum_{j=1}^{2d} u(x + e_j)
\end{equation}
for each $x\in \Z^d$. Observe that \eqref{hgraph} is the discrete counterpart  of the classical Mean Value Property for harmonic functions on $\Z^d$. Suppose now that $\mu_j >0$ for $j= 1,...,2d$ and that $\mu_1 + \cdots + \mu_{2d} = 1$ (we say in this case that $ \mu = (\mu_1,..., \mu_{2d})$ is an \textit{admissible probability distribution}). Then $u: \Z^d \to \R$ is said to be \textit{weighted harmonic} with respect to $\mu $ if
\begin{equation}\label{whgraph}
u(x) = \sum_{j=1}^{2d} \mu_j u(x + e_j)
\end{equation}
for each $x\in \Z^d$. In particular, weighted harmonic functions with $ \mu = (1/2d, ..., 1/2d)$ are just harmonic functions.

  If $1 < p < \infty$, a function $u : \Z^d \to \R$ is $p$-\textit{harmonic} if, for each $x\in \Z^d$,
 \begin{equation}\label{phgraph}
 \sum_{j=1}^{2d}  (u(x + e_j ) - u(x)) |u(x + e_j) - u(x) |^{p-2} = 0
 \end{equation}
Also, $u$ is \textit{weighted} $p$-\textit{harmonic} with respect to $\mu = (\mu_1,..., \mu_{2d})$ if
\begin{equation}\label{wpgraph}
\sum_{j=1}^{2d} \mu_j (u(x + e_j ) - u(x)) |u(x + e_j) - u(x) |^{p-2} = 0
\end{equation}

It turns out that  $p$-harmonic functions minimize the discrete  $p$-energy in the same way that harmonic functions  minimize the discrete $2$-energy. Note that \eqref{phgraph} is the discrete version of the $p$-\textit{Laplace} equation
$$
{\rm div} (|\nabla u|^{p-2}\nabla u ) = 0
$$
whose solutions (in a weak sense) are also called $p$-harmonic. In both settings: in the discrete and the continuous one, harmonic functions are, of course, recovered when $p=2$.

\

The classical Mean Value Property for harmonic functions is the key of the fruitful interplay between Potential Theory, Probability and PDEs. In recent years much work has been devoted to understand the probabilistic framework associated to some relevant nonlinear differential operators, such as the $p$-Laplacian and the $\infty$-Laplacian, in terms of certain nonlinear mean value properties. A relevant example (either in the Euclidean setting or in a metric measure space) is the following functional equation
 \begin{equation}\label{pharmo}
 u(x)=\frac{\alpha}{2}(\sup_{B_x}u+\inf_{B_x}u)+(1-\alpha)\dashint_{B_x}u \, d \mu.
 \end{equation}
 where $u$ is a (continuous) function in the closure of a bounded  given domain $\Omega$ in $\R^n$ or in  a metric measure space $(X, d, \mu)$, $B_x$ is a ball centered at $x$  contained in $\Omega$  and $\alpha \in[0,1]$. Note that when $\alpha = 0$ then we retrieve harmonic functions (see also \cite{AGG, AL}), whereas for $\alpha = 1$ we obtain the so-called \textit{harmonious} functions in $\Omega$, see \cite{LGA}. It turns out that, in the Euclidean setting and in the range $p \geq 2$, the functional equation \eqref{pharmo}  with $ \alpha = \alpha (p,d) = \frac{p-2}{p+2d}$ can be understood as a \textit{Dynamic Programming Principle} for a stochastic game related to the $p$-Laplacian, see \cite{PSSW, PS, MPR2, MPR, BR} and references therein for further details. Functions satisfying \eqref{pharmo} with $\alpha = \alpha (p,d)$ have been called $p$-harmonious, see \cite{AL,MPR}. It is then natural to consider discrete versions of equation \eqref{pharmo}. Let  $\alpha \in [0,1]$. We will say that $u: \Z^d \to \R$ is $\alpha$-\textit{harmonious} if
 \begin{equation}\label{alphahar}
 u(x) = \frac{\alpha}{2} \big ( \max_{j=1,...,2d} u(x+ e_j) + \min_{j=1,...,2d}u(x+e_j) \big ) + \frac{1- \alpha}{2d} \sum_{j=1}^{2d} u(x + e_j)
 \end{equation}
for each $x \in \Z^d$. Solutions of \eqref{alphahar} have also been called \textit{game} $p$-\textit{harmonic} by some authors because of their connections to stochastic games related to the $p$-Laplacian. In order to simplify notation, we prefer to call them  $\alpha$-harmonious and, thus, eliminate any reference to $p$ in the definition. The case $\alpha = 1$ deserves a special treatment, e.g. due to its importance in the Lipschitz extension problem and in the tug-of-war games, see~\cite{LGA, ETT, BEJ, MRT}. We say that $u: \Z^d \to \R$ is $\infty$-\textit{harmonic} if
\begin{equation}\label{inftyhar}
u(x) = \frac{1}{2} \big ( \max_{j=1,...,2d} u(x+ e_j) + \min_{j=1,...,2d}u(x+e_j) \big )
\end{equation}
for each $x\in \Z^d$.

In order to consider a more general framework, we need to introduce the concept of averaging operator.

 \begin{defn}\label{def:avoper}
  A function $F: \R^{2d} \to \R$ is an  \textit{averaging operator} if it satisfies the following structural assumptions:
\begin{itemize}
\item[i)]  $F$ is strictly increasing in each variable, that is, for each $j = 1,...,2d$ the (one variable) function
$$
t\to F(t_1,\ldots,t_{j-1}, t, t_{j+1},\ldots,t_{2d})
$$
is strictly increasing.  \vspace{0.12cm}
\item[ii)] $F(t_1 + c ,\ldots,t_{2d} + c) = c + F(t_1,\ldots,t_{2d})$ and $F(ct_1,\ldots,ct_{2d}) = c F(t_1,\ldots,t_{2d})$ for any $c \in \R$.
	

\end{itemize}
\end{defn}

We introduce now the class of functions satisfying a mean value property associated to an averaging operator.
\begin{defn}\label{def:Fharm}
Let $F: \R^{2d} \to \R$ be an averaging operator. A function $u : \Z^d \to \R$ is said to be $F$-\textit{harmonic} on $\Z^d$ if
\begin{equation}\label{F-harm}
u(x) = F \big ( u(x+e_1), \ldots,u(x + e_{2d}) \big )
\end{equation}
for each $x\in \Z^d$.
\end{defn}

In Section \ref{sec:ex} we will see that weighted harmonic, $p$-harmonic and $\alpha$-harmonious functions for $\alpha \in [0,1)$  are examples of $F$-harmonic functions on $\Z^d$ for appropriate averaging operators satisfying Definition~\ref{def:avoper}. See also Section \ref{sec:someprop} for the basic properties of averaging operators and $F$-harmonic functions on $\Z^d$.

\

Averaging operators in regular directed trees were introduced in \cite{ARY} and  developed in \cite{KLLW} with the purpose of studying set theoretical properties of nonlinear harmonic measures on the boundary of the tree (see also \cite{DMR} for further results in the same direction). The interest in the discrete setting can be explained from several perspectives. First, if a noncompact Riemannian manifold $M$ has controlled local geometry, then the $p$-harmonic parabolicity of $M$ can be characterized by the parabolicity of appropriate discrete structures, the so-called $\epsilon$-nets of $M$. Moreover, $p$-parabolicity implies the strong Liouville property for $p$-harmonic functions, see e.g.~\cite[Corollary 2.14]{HS2}, and the existence of nonconstant $p$-harmonic functions on $M$ can be equivalently expressed for $\epsilon$-nets, see \cite{HS1, HS2} and their references. On the other hand, it has recently been shown that, under certain regularity assumptions on a bounded domain $\Omega \subset \R^n $,  solutions of the $p$-harmonic Dirichlet Problem in $\Omega $ can be approximated by appropriate rescaled solutions of equation \eqref{pharmo}, see \cite{MPR2, AL2}. The $\alpha$-harmonious equation \eqref{alphahar} arises then as a natural discretization of \eqref{pharmo}. Finally, nonlinear discrete models have shown to be very useful to solve some potential theoretic problems in the, more difficult, continuous setting. One such example is the lack of additivity of the $p$-harmonic measure at zero level, see \cite{KLLW, LMW}.

\

The main topic that we address in this note is the \textit{strong Liouville property} for $F$-harmonic functions on $\Z^d$. More precisely,  we seek conditions on the averaging operator $F$ implying that nonnegative $F$-harmonic functions on $\Z^d$ are constant. A classical theorem by  B\^ocher (1903) and Picard (1923) asserts that nonnegative harmonic functions in $\R^n$ must be constant. The B\^ocher-Picard result is usually known as the Liouville theorem, because of the analogy with the classical result for entire holomorphic functions. As for the discrete setting, the fact that nonnegative harmonic functions on $\Z^d$ are constant is implicit in the early works of Le Roux \cite{LR} and Capoulade \cite{C}; see also the excellent paper by Duffin \cite{Du}. In these three works, the authors observe that the use of  explicit formulas for the Poisson discrete kernel in squares implies a Harnack inequality, from which the strong Liouville property follows in a standard way.

\

The following two concepts will play a relevant role in the paper.

\begin{defn}\label{def:symm}
Let $F: \R^{2d} \to \R$ be an averaging operator and $(\mu_1 ,..., \mu_{2d})$ an admissible probability distribution.
\begin{enumerate}
\item[i)]  We say that $F$ is \textit{symmetric} if, for any $t_1,...,t_{2d}\in \R$ and each $j\in \{ 1,...,d\}$,
$$
F(t_1,...,t_{2d}) = F(t_{\pi_j(1)},...,t_{\pi_j(2d)})
$$
where $\pi_j$ is the transposition of the set $\{ 1,...,2d \}$ which interchanges indexes $j$ and $j+d$, that is $\pi_j (j) = j+d$, $\pi_j (j+d) = j$ and $\pi_j (k) = k$ if $k\in \{1,...,2d \} \setminus \{j, j+d \} $. \vspace{0.1cm}
\item[ii)] We say that  $(\mu_1 ,..., \mu_{2d})$ is  \textit{symmetric} if $\mu_{j+d} = \mu_j$ for each $j\in \{ 1,...,d \}$.
    \end{enumerate}
\end{defn}
Observe that if $(\mu_1 ,..., \mu_{2d})$ is a symmetric probability distribution then $F(t_1,...,t_{2d}) = \mu_1 t_1 +...+ \mu_{2d}t_{2d}$ is a symmetric averaging operator.

\begin{defn}\label{def:charac}
A function $u : \Z^d \to \R$ is said to be a \textit{multiplicative character} on $\Z^d$ if there are positive constants $A_1,...,A_d$ such that
\begin{equation}\label{charact}
u(x_1,...,x_d ) = A_1^{x_1}\cdots A_d^{x_d}
\end{equation}

\end{defn}

In the context of harmonic functions on abelian groups there is an abstract approach to the Liouville property linking the notions of symmetry, multiplicative characters and minimal functions. Let $\mu = (\mu_1,..., \mu_{2d})$ be an admissible probability distribution,  $\mathcal{H}$ the cone of all nonnegative weighted harmonic functions on $\Z^d$ with respect to $\mu$ and $\mathcal{H}_{0} = \{ u \in \mathcal{H} \, :  u(0,...,0) = 1 \} $. Note that $\mathcal{H}_{0}$ is convex and compact (with the topology of pointwise convergence; see Proposition \ref{compact} at this respect). We say that $u\in \mathcal{H}$ is  \textit{minimal} if, whenever $u,v \in \mathcal{H}$ such that $0 \leq v \leq u$ then there exists $\lambda \geq 0$ such that $v = \lambda u$. One can invoke the Krein-Milman theorem to deduce the existence of a minimal function  $u \in \mathcal{H}_{0}$ and it is easy to see that $u$ must be a character of the form \eqref{charact}, where $A_1,...,A_d$ satisfy
\begin{equation}\label{compati}
\sum_{j=1}^d ( \mu_j A_j + \mu_{j+d}A_j^{-1} ) = 1,
\end{equation}
see \cite{S}. Furthermore, the distribution $\mu = (\mu_1,..., \mu_{2d} )$ is  symmetric if and only if $A_1 = \cdots = A_d = 1$. Since, by the Martin theory, any function in $\mathcal{H}$ can be represented in terms of minimal functions (\cite{S}), this program would imply that if $\mu$ is symmetric then $\mathcal{H}_{0}$ reduces to constant function equal to $1$ and, therefore, nonnegative weighted harmonic functions on $\Z^d$ with respect to a symmetric distribution must be constant. This is a particular instance of a more general result by Choquet-Deny and Doob-Snell-Williamson, see~\cite{CD, DSW} and also \cite[Theorem 7.1]{S}. We refer to \cite{Del} for another account of symmetric distributions in the context of harmonic Harnack inequality on general graphs, 
to~\cite{BLMS} for yet another recent development about the harmonic Liouville property on $\Z^2$ and~\cite{T} for an expository article about harmonic functions on $\Z^2$. In summary, by the above discussion we infer that in the linear weighted case and for an admissible probability distribution $\mu$, it holds that \emph{the distribution $\mu$ is symmetric if and only if the (strong) Liouville property holds for harmonic functions on $\Z^d$ with respect to the distribution $\mu$.}

\

As for the nonlinear setting, the strong Liouville property for $p$-harmonic functions in $\R^n$ and $\Z^d$ has been obtained, in both cases, from the much stronger Harnack inequality (see \cite{HKM} for the continuous case and and \cite{HS2}, where the authors consider general graphs with certain geometrical restrictions). It is worth remarking that the proof in \cite{HS2} is highly non-elementary and relies on establishing discrete counterparts of  nonlinear PDEs techniques, with Caccioppoli inequalities, Sobolev-Poincar\'e inequalities and discrete versions of the John-Nirenberg theorem as key tools. Other versions of the Liouville theorem for $p$-harmonic functions or quasiminimizers of the $p$-energy have been obtained in the context of  manifolds and metric measure spaces, see~\cite{BBS1, BBS2} and references therein. In these two settings, Liouville theorems are related to the notion of parabolicity and conditions on the underlying space allowing for the existence of nontrivial $p$-harmonic functions.

\

The main motivation of our work was a comment in \cite{HS2} where the authors suggested the convenience of finding a more elementary and direct approach to the Harnack and Liouville properties for $p$-harmonic functions on graphs. Given an averaging operator $F: \R^{2d} \to \R$, the concept of minimal functions still makes sense but, unfortunately, we cannot invoke the Krein-Milman theorem. By using elementary methods, we circumvent this difficulty and  obtain a positive $F$-harmonic function on $\Z^d$ which is also a multiplicative character and plays, in some sense, the role of a minimal function in this context. If, in addition, $F$ is symmetric then such a character must be constant, implying the strong Liouville property for $F$. The following is the main result of our work.

\

\begin{theorem}[Strong Liouville property] \label{thm:main}
Let  $F: \R^{2d} \to \R$ be a symmetric averaging operator. Then any nonnegative $F$-harmonic function on $\Z^d$ is constant.
\end{theorem}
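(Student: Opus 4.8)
The plan is to prove that $\Gamma = 1$, where $\Gamma$ is the supremum ratio defined in \eqref{M1}, since this forces every nonnegative $F$-harmonic function to have equal values on neighbours, hence to be constant. We already know from Proposition~\ref{prop:6.1} that $1 \leq \Gamma \leq 1/\lambda$ and that $\Gamma = \sup\{v(e_1): v\in\mathcal P_\lambda\}$, and from Proposition~\ref{prop:6.2} that $\Gamma$ is attained by some extremal $u\in\mathcal P_\lambda$ with $u(e_1)=\Gamma$. So I would argue by contradiction, assuming $\Gamma > 1$, and extract a contradiction from the extremality of $u$.

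**Extremality and its consequences.** The heart of the argument is to examine the extremal function $u$ at the origin and its neighbour $e_1$. By definition of $\Gamma$, for \emph{every} pair of neighbours $x\sim y$ we have $u(y)\leq \Gamma\, u(x)$; in particular the ratio $u(e_1)/u(0) = \Gamma$ is maximal. I would first translate and use the symmetries of $\mathcal T$ (Proposition~\ref{prop:perm-harm}) to normalize the picture around the edge $(0,e_1)$. Writing the $F$-harmonicity equation at the origin, $u(0) = F(u(n_1(0)),\ldots,u(n_{2d}(0)))$ with $u(0)=1$ and $u(e_1)=\Gamma$, and also at the point $e_1$, I would exploit that the remaining neighbour values are pinched into the interval $[\Gamma^{-1},\Gamma]$ by the two-sided bound $\Gamma^{-1}u(x)\leq u(y)\leq \Gamma u(x)$. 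The goal is to set up exactly the hypothesis of property~vi): a configuration of the form $F(\Gamma,\Gamma^{-1},t_3,\ldots,t_{2d})=1$ with $\Gamma^{-1}\leq t_i\leq \Gamma$.

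**Invoking property~vi) to get concavity/contradiction.** Once property~vi) applies, it yields $\frac{1}{2d-2}\sum_{i=3}^{2d} t_i \leq 1$, i.e. the average of the ``side'' neighbour-values is at most the central value. This is the discrete concavity statement the authors flagged when introducing vi). I would then combine this averaged inequality along a line or across a suitable slab of the lattice — summing the $F$-harmonicity relations and telescoping — to show that if $\Gamma>1$ the function $u$ would have to grow strictly faster than the extremal rate allows, or equivalently that some neighbour ratio exceeds $\Gamma$, contradicting maximality. The strict monotonicity property i') (through the comparison principle, Proposition~\ref{prop:comprinp}) is what upgrades the weak inequalities to strict ones and closes the loop: at the extremal edge the inequality $u(e_1)\leq \Gamma u(0)$ must be an equality propagating to all neighbours, forcing the configuration to be rigid.

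**The main obstacle** I anticipate is the bookkeeping in the middle step: correctly organizing the neighbour values of $u$ at the origin and at $e_1$ so that property~vi) can be applied to \emph{both} equations simultaneously, and then combining the two resulting averaged inequalities to contradict $\Gamma>1$. The difficulty is that vi) only controls the arithmetic mean of the transverse coordinates, so I expect to need a clever summation — perhaps averaging the harmonicity identity over a large box $B_N$ and letting $N\to\infty$, or tracking the quantity $\sum_{\|x\|=n} u(x)$ — to convert the per-vertex concavity into a global statement forcing $\Gamma=1$. Making this telescoping rigorous, while respecting that the extremal $u$ attains the ratio $\Gamma$ only along specific directions, is where the real work lies.
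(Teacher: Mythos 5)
Your plan has the right ingredients (the extremal function from Propositions~\ref{prop:6.1}--\ref{prop:6.2}, the comparison principle, property~vi)), but it is missing the one mechanism that makes the paper's proof work, and the endgame you sketch is not viable as stated. The key step you do not identify is the following rigidity argument at the extremal edge: writing $t_j=u(e_1+e_j)$ and $t_j'=u(e_j)$, the two harmonicity equations give $F(t_1,\ldots,t_{2d})=\Gamma F(t_1',\ldots,t_{2d}')=F(\Gamma t_1',\ldots,\Gamma t_{2d}')$, while the definition of $\Gamma$ gives $t_j\leq \Gamma t_j'$ componentwise; the Comparison Principle (Proposition~\ref{prop:comprinp}, i.e.\ property~i')) then forces \emph{equality} $t_j=\Gamma t_j'$ for every $j$. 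Iterating this in the $e_1$-direction and then in the remaining directions yields the exact product structure $u(x_1,\ldots,x_d)=\gamma(x_2,\ldots,x_d)\,\Gamma^{x_1}$. This structure is indispensable for your next step: to invoke property~vi) you need a configuration $F(\Gamma,\Gamma^{-1},t_3,\ldots,t_{2d})=1$ in which \emph{both} the value $\Gamma$ and the value $\Gamma^{-1}$ appear among the normalized neighbour values. At the origin you only know that $u(e_1)/u(0)=\Gamma$; nothing guarantees that some other neighbour attains the ratio $\Gamma^{-1}$. It is precisely the product structure that supplies the pair $\gamma\Gamma$ and $\gamma\Gamma^{-1}$ at the two $\pm e_1$ neighbours of $(0,x_2,\ldots,x_d)$, so the order of operations (comparison first, vi) second) cannot be reversed.

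Your proposed conclusion --- a telescoping or box-averaging argument to contradict $\Gamma>1$ --- is left unspecified, and you yourself flag it as ``where the real work lies''; that is exactly the part that must be supplied. The paper does something different and cleaner: property~vi), applied at every point $(0,x_2,\ldots,x_d)$, shows that $\gamma$ satisfies the discrete mean inequality $\frac{1}{2d-2}\sum_{j=2}^{d}\bigl(\gamma(x+e_j)+\gamma(x-e_j)\bigr)\leq\gamma(x)$, hence $\gamma$ is a positive concave function on $\Z^{d-1}$ and therefore constant, so $u(x)=\Gamma^{x_1}$. Then $1=F(\Gamma,\Gamma^{-1},1,\ldots,1)$ rescales to $F(\Gamma,-1,0,\ldots,0)=0$, and the oddness identity $F(1,-1,0,\ldots,0)=-F(1,-1,0,\ldots,0)=0$ (from homogeneity and permutation invariance) together with the Comparison Principle forces $\Gamma=1$. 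Without these two concrete steps --- the equality case of the comparison principle producing the separable form, and the final symmetry computation --- your outline does not close.
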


We stress the fact that symmetry is an essential hypothesis in Theorem \ref{thm:main}, even for weighted harmonic functions on $\Z$, see Remark \ref{symmexa} and the above discussion.

Our method provides, in particular, an alternative and more elementary proof of the strong Liouville property for discrete $p$-harmonic functions and weighted $p$-harmonic functions (with respect to symmetric distributions) on $\Z^d$, cf. Section~\ref{sec:ex}.

\begin{cor}\label{cor6.2} Let $1 < p < \infty$ and let $\mu = (\mu_1,..., \mu_d )$ be a symmetric probability distribution. Then any nonnegative weighted $p$-harmonic function on $\Z^d$ with respect to $\mu$ is constant. In particular, nonnegative $p$-harmonic functions on $\Z^d$ are constant.
\end{cor}

Moreover, our approach also applies to $\alpha$-harmonious functions.

\begin{cor}\label{cor6.1} Let $0 \leq \alpha < 1$. Any nonnegative $\alpha$-harmonious function on $\Z^d$ is constant. In particular, if $\alpha=0$, then we retrieve the strong Liouville property for discrete harmonic functions.
\end{cor}
This corollary follows immediately from Theorem~\ref{thm:main} and the fact that for $0\leq \alpha < 1$ the $\alpha$-harmonious functions are $F$-harmonic for a symmetric averaging operator $F$, cf. Section~\ref{sec:ex}.

The case $\alpha = 1$ deserves a special treatment because the corresponding operator does not satisfy the requirements of  Definition \ref{def:avoper}, see \ref{ex:inftylap}.

\begin{theorem}\label{prop:inftylap}
Any nonnegative $\infty$-harmonic function on $\Z^d$ is constant.
\end{theorem}

\vspace{0.12cm}

{\bf Acknowledgements:} This research was initiated  when the second author was visiting IMPAN, as a guest of the Simons semester ``Geometry and analysis in function and mapping theory on Euclidean and metric measure spaces"  supported by the grant 346300 for IMPAN from the Simons Foundation and the matching 2015-2019 Polish MNiSW fund. He wishes to thank IMPAN for hospitality and support. Moreover, the authors would like to express their gratitude to the referees for insightful comments, questions and suggestions that have contributed to improve the paper significantly.

\section{Examples}\label{sec:ex}

We begin with examples of operators satisfying Definition~\ref{def:avoper} and in the later part of this section we will discuss operators failing some of the properties i) or ii).

We remind that $\mu = ( \mu_1 ,..., \mu_{2d})$ is an admissible probability distribution if $\mu_j >0$ for $j=1,...,2d$ and  $\mu_1 +...+ \mu_{2d} =1$. The distribution is symmetric if $\mu_{j+d} = \mu_j$ for $j= 1,...,d$.

In the first two examples we discuss the discrete weighted $p$-Laplacians $F^{\mu}_p$ with the case $p=2$ corresponding to the Laplacian.

\begin{ex}[the discrete weighted Laplacian] Let $\mu = (\mu_1,...,\mu_{2d})$ be an admissible probability distribution. We define
$$
F^{\mu}_2 (t_1,\ldots,t_{2d}) = \sum_{j=1}^{2d} \mu_j t_j
$$

Then it is easy to check that $F$ satisfies properties i) and ii) in Definition \ref{def:avoper} so $F^{\mu}_2$ is an averaging operator. If, in addition, $\mu$ is  symmetric then $F^{\mu}_2$ is a symmetric averaging operator. Notice that for the uniform distribution $\mu_1=\ldots=\mu_{2d}=(2d)^{-1}$ we retrieve the classical {\bf discrete Laplacian} with $F_2(t_1,\ldots,t_{2d}) = (2d)^{-1} \sum_{j=1}^{2d} t_j$, also cf.~\eqref{hgraph}.

\end{ex}

Next example illustrates that weighted $p$-harmonic functions are $F$-harmonic for appropriate averaging operators.
\begin{ex} [the discrete weighted $p$-Laplacian]  \label{ex-pharm} Let $1 < p < \infty$ and $\mu = (\mu_1, \cdots, \mu_{2d})$ an admissible probability distribution.  Given  $t_1,\ldots,t_{2d} \in \R$, let us define $t$ as the unique real number defined by the implicit identity
\begin{equation}\label{eq:p-harm}
\sum_{j=1}^{2d} \mu_j(t-t_j) |t - t_j|^{p-2} = 0.
\end{equation}
We put
\begin{equation}\label{phar}
F^{\mu}_p(t_1,\ldots,t_{2d}) = t.
\end{equation}

The existence and uniqueness of such a $t$ are consequences of the following observations. First we note that, for each $a\in \R$ the (one variable) function $s \to (s-a) |s-a|^{p-2}$ is strictly increasing in $\R$. Now choose  $t_1,\ldots,t_{2d} \in \R$ and define $\psi : \R \to \R$ by
    \begin{equation}\label{psi}
    \psi (s) =  \sum_{j=1}^{2d} \mu_j|s - t_j|^{p}.
    \end{equation}
Then $\psi$ is differentiable and
   $$
    \psi' (s) = p\sum_{j=1}^{2d} \mu_j(s-t_j)|s-t_j|^{p-2}.
    $$
Since $s \to (s-t_j) |s-t_j|^{p-2}$ is strictly increasing in $\R$ and weights $\mu_j$ are positive, it follows that $\psi$ is strictly convex in $\R$ and it attains a unique minimum at, say, $t\in \R$. Note that $t$ is the unique solution of $\psi '(t) = 0 $, which is equivalent to the $p$-harmonic equation~\eqref{eq:p-harm}. This proves the existence and uniqueness of $F^{\mu}_p (t_1,\ldots,t_{2d})$.

Similar arguments show that $F^{\mu}_p$ is strictly increasing in each variable, cf. property i) of Definition~\ref{def:avoper}. Indeed, let $t_1, T_1, t_2, \ldots,t_{2d} \in \R$ be such that $t_1 < T_1$ and denote by $t := F^{\mu}_p (t_1,t_2,\ldots,t_{2d})$ and $T:= F^{\mu}_p(T_1,t_2,\ldots,t_{2d})$. Let $\psi$ be as in (\ref{psi}). Since $t_1 < T_1$ we have
$$
(t_1 - T)|t_1 -T|^{p-2} < (T_1 -T)|T_1 -T|^{p-2}
$$
or, equivalently, $(T - T_1)|T-T_1|^{p-2} < (T -t_1)|T-t_1|^{p-2}$. Then
\begin{eqnarray*}
\psi '(T) & = & \mu_1(T-t_1)|T-t_1|^{p-2} + \sum_{j=2}^{2d}\mu_{j}(T-t_j)|T-t_j|^{p-2}  \\ & > &  \mu_1 (T- T_1)|T-T_1|^{p-2} +  \sum_{j=1}^{2d}\mu_j (T-t_j)|T-t_j|^{p-2} = 0
\end{eqnarray*}
that is, $\psi' (T) >0$, which implies that $t < T$ according to the preceding comments. This proves $F^{\mu}_p (t_1, t_2,\ldots,t_{2d}) < F^{\mu}_p(T_1, t_2,\ldots,t_{2d})$ as claimed.

It is straightforward to verify that property ii) holds as well. If, in addition, $\mu$ is a symmetric distribution then $F^{\mu}_p$ is a symmetric averaging operator.

As in the case of Laplacian in Example 3.1, for the uniform distribution $\mu_1=\ldots=\mu_{2d}= (2d)^{-1}$ we retrieve the {\bf classical discrete $p$-Laplacian} with $F_p(t_1,\ldots,t_{2d}) = t$, where $\sum_{j=1}^{2d} (t-t_j) |t - t_j|^{p-2} = 0$.

\end{ex}

\begin{ex} [$\alpha$-harmonious averaging operators]\label{ex-pharmonious} Suppose that $0 \leq \alpha <1$. Define
\begin{equation}\label{alpha}
F(t_1,\ldots,t_{2d}) = \frac{\alpha }{2}\big ( \max_{1 \leq j
\leq 2d} t_j + \min_{1 \leq j \leq 2d}t_j \big ) + \frac{1 -
\alpha }{2d} \sum_{j=1}^{2d} t_j.
\end{equation}
Then, one can easily verify properties i) and ii) in Definition \ref{def:avoper}. Note that the restriction $\alpha < 1$ is needed in order to deduce that $F$ is strictly increasing in each variable. The averaging operator $F$ is clearly symmetric.

\end{ex}

\begin{ex}[the discrete $\infty$-Laplacian]\label{ex:inftylap}
An important particular case of the $\alpha$-harmonious operator comes up when $\alpha \equiv 1$, in which case
\[
F_\infty(t_1,\ldots, t_{2d}) = \frac12\left(\max_{1 \leq i \leq 2d} t_i + \min_{1 \leq i \leq 2d}t_i \right)
\]
and the corresponding operator is called the discrete $\infty$-Laplacian. It is easy to check that $F_\infty$ satisfies property ii). As for i), $F$ is non-decreasing with respect to each variable but fails to be strictly increasing. For example, if $d=2$, then  $F_{\infty}(1,0,0,0) = F_{\infty}(1,1,0,0)$.

\end{ex}

We close this section with some examples of operators failing Definition~\ref{def:avoper}.
\begin{ex}
Recall the definition of the discrete $1$-Laplacian for $u:\Z^d\to \R$:
  \[
   \Delta_1u(x)={\rm median}\{u(n_1(x))-u(x),\ldots, u(n_{2d}(x))-u(x)\},
  \]
 where the median is defined as in e.g. Definition 6 in~\cite{MOS}. Namely, the median of the set $\{x_1,\ldots, x_k\}$ is found by arranging all the numbers $x_i$, for $i=1,\ldots, k$ from the lowest value to highest one and selecting the middle one. If $k$ is even, the median is defined to be the mean of the two middle values.

 \noindent Hence, the related averaging operator $F_1:\R^{2d}\to \R$ can be defined as follows: $F_{1}(t_1,\ldots,t_{2d}):={\rm median}\{t_1,\ldots, t_{2d}\}$. It is easy to see that property ii) holds. However, property i) fails as, for instance, $F_1(1,2,3)=F_1(1.5,2,3)=2$. Moreover, $F_1(0,\ldots,1,\ldots,0)=0$.
\end{ex}

\begin{ex}
 Consider the discrete positive eikonal equation (see Section 3.1 in~\cite{MOS}) and the related $F$ operator $F:\R^{2d}\to \R$ defined as follows
 \[
  F^+(x):=\max_{1\leq i \leq 2d} t_i-1.
 \]
 Property ii) holds for $F^+$ but $F^+(1,\ldots,0)=F^+(0,\ldots,1)=0$ giving that property i) fails. Also,  $F^+(0,\ldots,1,\ldots,0)= 0$. Similar discussion holds for the discrete negative eikonal operator $F^-$.

\end{ex}

\section{Basic properties of averaging operators and $F$-harmonic functions on $\Z^d$}\label{sec:someprop}

This section is devoted to establishing certain properties of averaging operators and related $F$-harmonic functions that are needed to prove the main result in Section~\ref{sec:Liouv}. Recall that if $\{ e_1, \ldots,e_d \}$ is the canonical basis of $\R^d$ then we set  $e_{j+d} = -e_j$ for $j=1,...,d$.
The following proposition collects some basic consequences of Definition~\ref{def:avoper}.
\begin{prop}\label{conseq}
Let $F: \R^{2d} \to \R$ be an averaging operator, according to Definition~\ref{def:avoper}. 
\begin{enumerate}
\item $F(0,\ldots,0) = 0$ and $F(1,\ldots,1) = 1$. \vspace{0.12cm}
\item Let
\begin{equation}\label{deflambda}
 \lambda := \min_{j=1,...,2d} F(0,...,0,1,0,...,0)
\end{equation}
where  $1$ is located at position $j$. Then $0 < \lambda < 1$. We call $\lambda$ the \textbf{ellipticity constant} of $F$. 
\vspace{0.12cm}
\item If $t_1, \ldots , t_{2d} \in \R$ are not all equal then
$$
\min\{t_1,\ldots,t_{2d}\} < F (t_1,\ldots,t_{2d}) < \max \{ t_1,\ldots,t_{2d} \}
$$
\end{enumerate}
\end{prop}

\begin{proof}
$F(0, \ldots , 0) = 0$ follows from property ii) in  Definition~\ref{def:avoper} taking  $c= 0$.  From property i), $F(1,\ldots,1) = F(1+0,\ldots,1+0) = 1 +F(0,\ldots,0) = 1$ so $F(1,\ldots,1) = 1$. That $ 0 < \lambda < 1 $ is consequence of $F(0, \ldots , 0) = 0$, $F(1,\ldots,1) = 1$ and the fact that $F$ is strictly increasing in each variable. Finally, if $t_1,\ldots,t_{2d}$ are not all equal and $t := \max \{t_1,\ldots,t_{2d}\}$ then, by properties i),ii) in Definition \ref{def:avoper} and (1) above we get $F(t_1,\ldots,t_{2d}) < F(t,\ldots,t) = tF(1,\ldots,1) = t$ and a similar argument gives the lower estimate.
\end{proof}


\begin{rem}\label{compar}
Part (3) of Proposition \ref{conseq} directly implies the following observation: if  $F: \R^{2d} \to \R$ is an averaging operator, $t_1' \leq t_1,\ldots,t_{2d}' \leq t_{2d}$ and $F(t_1,\ldots,t_{2d}) = F(t_1',\ldots,t_{2d}')$ then  $t_j = t_j'$ for all $j = 1,\ldots,2d$. This result will be repeatedly used in the paper.
\end{rem}

We obtain now the following modulus of continuity estimate for operators satisfying Definition~\ref{def:avoper}.

\begin{prop}\label{propv} Let $F: \R^{2d} \to \R$ be an averaging operator as in Definition~\ref{def:avoper}. Then, for all $(t_1,\ldots,t_{2d}), (t_1',\ldots,t_{2d}')\in \R^{2d}$ we have that
\begin{equation}\label{prop-v}
 |F(t_1,\ldots,t_{2d}) - F(t_1',\ldots,t_{2d}')| \leq \max\{|t_1-t_1'|,\ldots, |t_{2d}-t_{2d}'| \}.
\end{equation}
In particular, $F$ is continuous in $\R^{2d}$ and
$$
 |F(t_1,\ldots,t_{i-1},t_{i}, t_{i+1},\ldots,t_{2d}) - F(t_1,\ldots,t_{i-1},t_{i}', t_{i+1},\ldots,t_{2d})| \leq |t_i-t_i'|.
$$
\end{prop}	

\begin{proof}
 Set $\delta:=\max\{|t_1-t_1'|,\ldots, |t_{2d}-t_{2d}'| \}$. Then, for each $i=1,\ldots, 2d$, it holds that $t_i'\leq t_i+\delta$ and, by properties i) and ii), we get
 \[
  F(t_1',\ldots,t_{2d}')\leq F(t_1+\delta,\ldots,t_{2d}+\delta)=F(t_1,\ldots,t_{2d})+\delta.
 \]
 Analogously, we obtain that $ F(t_1,\ldots,t_{2d})\leq F(t_1',\ldots,t_{2d}')+\delta$ and~\eqref{prop-v} follows.
\end{proof}

The following proposition collects the main properties of symmetric averaging operators that will be needed later (recall Definition \ref{def:symm}).

\begin{prop}\label{symme}
Let $F: \R^{2d} \to \R$ be a symmetric averaging operator. Then
\begin{enumerate}
\item $F(t_1,\ldots, t_{d}, t_{d+1},\ldots, t_{2d}) = F(t_{d+1},\ldots, t_{2d}, t_1,\ldots, t_{d} )$, for $t_1 , \ldots , t_{2d} \in \R$. \vspace{0.12cm}
\item $F(t_1,\ldots, t_{d}, -t_1,\ldots, -t_{d}) = 0$ for $t_1,\ldots, t_{d} \in \R$. \vspace{0.12cm}
\item  Suppose that $A_1 , \ldots, A_d >0$ and $F(A_1 , \ldots, A_d, A_1^{-1}, \ldots , A_d^{-1} ) = 1$. Then
$$
A_1 = \cdots = A_d = 1.
$$
\end{enumerate}
\end{prop}

\begin{proof}
Part (1) is a consequence of the fact that the permutation
$$
(1, \ldots , 2d) \to (d+1, \ldots, 2d, 1, \ldots , d)
$$
is the composition of the $d$ transpositions interchanging $j, j+d$ for $j =1, \ldots, d$. To see (2) note that, by property ii) in Definition~\ref{def:avoper} and (1), it follows
$$
-F(t_1, \ldots, t_d, -t_1 , \ldots, - t_d) = F(-t_1, \ldots, - t_d, t_1,\ldots, t_d) = F(t_1, \ldots , t_d , -t_1 , \ldots, -t_d)
$$
implying $F(t_1, \ldots , t_d , -t_1 , \ldots, -t_d) = 0$. As for part (3), set $B_j = A_j$ if $A_j \geq 1$ and $B_j = A_j^{-1}$ if $0 < A_j < 1$. Then $B_j \geq 1$ for each $j = 1, \ldots, d$ and, by the assumptions and by the symmetry of $F$,  $F(B_1, \ldots, B_d, B_1^{-1}, \ldots, B_d^{-1}) = 1$. Therefore,
\begin{eqnarray*}
0 & = &  F(B_1 -1, \ldots, B_d -1, B_1^{-1} -1, \ldots, B_d^{-1}-1) \\
& = & F \big ( B_1 -1 , \ldots , B_d -1 , -\frac{B_1 -1}{B_1}, \ldots - \frac{B_d -1}{B_d}   \big ) \\
& \geq & F ( B_1 -1 , \ldots , B_d -1 , - (B_1 -1), \ldots - (B_d -1) )  = 0
\end{eqnarray*}
where the first identity follows from property ii) in Definition~\ref{def:avoper}, the inequality is consequence of the fact that $B_j \geq 1$ together with property i) in Definition~\ref{def:avoper} and the last identity follows from part (2) above. Since $B_j ^{-1} -1 \geq -(B_j -1)$, Remark~\ref{compar} implies that $B_j^{-1} -1 = - (B_j -1)$ and, consequently, $B_j = 1$ for $j= 1, \ldots, d$.
\end{proof}

\

The rest of the section is devoted to the main properties of $F$-harmonic functions that will be used in the proof of the main result. If $x\in \Z^d$, we remind that  $n_j(x) = x + e_j$ are the $2d$ neighbours of $x$ on $\Z^d$, for $j=1,...,2d$. We start by the simple invariance property which follows from Definition~\ref{def:Fharm} and property ii) in Definition~\ref{def:avoper}.

\begin{prop}\label{prop:perm-harm}  Let  $F: \R^{2d} \to \R$ be an averaging operator and $u : \Z^d \to \R$ be $F$-harmonic. Then $v(x) = cu(x)$ and $w(x)=u(x+a)$ are also $F$-harmonic for any $c\in \R$ and $a\in \Z^d$.
\end{prop}	
	

We  remind that  $\lambda$ denotes the ellipticity constant of $F$ given by formula~\eqref{deflambda}.

\begin{prop}[Local Harnack inequality]\label{localH} Let  $F: \R^{2d} \to \R$ be an averaging operator and $u : \Z^d \to \R$ be $F$-harmonic. Then, for any two neighbouring points $x\sim y$ with $x,y\in \Z^d$,
\begin{equation}\label{harnack}
 u(x) \leq \frac{1}{\lambda}u(y)
\end{equation}
where $\lambda$ is the ellipticity constant of $F$.
\end{prop}	
\begin{proof}
Suppose, for simplicity that $\displaystyle t = \max_{i=1,\ldots,2d} u(n_i(x)) = u(n_1 (x))$. Then, by property i), Remark~\ref{compar} and property (2) in Proposition~\ref{conseq}, it holds that
$$
u(x)= F\big(u(n_1(x)),\ldots,u(n_{2d}(x))\big) \geq F(t,0,\ldots,0) \geq t\lambda
$$
and the Harnack estimate follows.
\end{proof}

The following are direct consequences of the local Harnack inequality.

\begin{cor}\label{minimum}
Let $F: \R^{2d} \to \R$ be an averaging operator and $u : \Z^d \to \R$ be $F$-harmonic. If $u \geq 0$ and $u(a) = 0$ for some $a\in \Z^d$, then $u \equiv 0 $.   	
\end{cor}

\begin{cor}\label{prop:loc-bound}(Local boundedness) Let $F: \R^{2d} \to \R$ be an averaging operator and $u : \Z^d \to \R$ be a nonnegative $F$-harmonic function on $\Z^d$ such that $u(0,\ldots,0) = 1$. Then
\begin{equation}\label{bound}	
u(x) \leq \Big ( \frac{1}{\lambda}\Big ) ^{\|x\|} 	
\end{equation}	
for all $x = (x_1,\ldots,x_d)\in \Z^d$, where $\|x\| = |x_1| +\cdots+|x_d|$ and $\lambda$ is the ellipticity constant of $F$.
\end{cor}


The following result says that the family of $F$-harmonic functions is closed under pointwise convergence. The proof is the direct consequence of the continuity of $F$, see Proposition \ref{propv}.

\begin{prop}\label{prop:conv}
 Let $F: \R^{2d} \to \R$ be an averaging operator and $(u_k)$ a sequence of $F$-harmonic functions on $\Z^d$ such that
$$
\lim_k u_k (x) = u(x)
$$
for each $x\in \Z^d$. Then $u$ is $F$-harmonic.
\end{prop}	

\

For a fixed averaging operator $F: \R^{2d} \to \R$ we define
\begin{equation}\label{H0}
\mathcal{H}_0 = \{ u : \Z^d \to \R :  u \, \text{is} \,  F\text{-harmonic}, \, u\geq 0, \, u(0,\ldots,0) = 1   \}.
\end{equation}

\

 The following is a compactness result for the class $\mathcal{H}_0$ reminiscent of the classical Harnack convergence theorem for sequences of nonnegative harmonic functions.

\begin{prop}\label{compact}
Let $F: \R^{2d} \to \R$ be an averaging operator. Then each sequence  $ \{ u_n \} \subset\mathcal{H}_0$ contains a subsequence $\{u_{n_k} \} $ such that $\{ u_{n_k} \} \to u$ pointwise on $\Z^d$ for some $u \in \mathcal{H}_0$.
\end{prop}

\begin{proof}
By Corollary \ref{prop:loc-bound}, the class $\mathcal{H}_0$ is locally bounded so $\{ u_n \}$ is pointwise bounded. Since $\Z^d$ is countable, a standard diagonal argument shows that there exists a subsequence $\{u_{n_k} \}$ so that $u_{n_k} \to u$ pointwise and, by Proposition \ref{prop:conv}, $u \in \mathcal{H}_0$.
\end{proof}

\section{Existence of characters}\label{sec:charac}
Let $F: \R^{2d} \to \R$ be an averaging operator and let $\mathcal{H}_0$ be as in \eqref{H0}. The main purpose of this section is to show the existence of a multiplicative character in $\mathcal{H}_0$ satisfying certain extremal property. We recall the notation $x\sim y$ whenever $x$,$y\in \Z^d$ are neighbours and $e_{j+d} = -e_j$ for $j=1,\ldots, d$, where $\{e_1,\ldots, e_d\}$ is the canonical basis of $\R^d$.

\begin{prop}\label{Gamma}
Let $F: \R^{2d} \to \R$ be an averaging operator. Define
\begin{equation}\label{Gamma}
\Gamma = \sup \left\{ \frac{u(y)}{u(x)}: u\in \mathcal{H}_0, \,  x,y\in \Z^d,\, x \sim y  \right\}.
\end{equation}
Then $1 \leq \Gamma \leq \frac{1}{\lambda}$ and
\begin{equation}\label{Gammabasis}
\Gamma = \sup \{ v(e_j) : v \in \mathcal{H}_0 \, , \, j= 1,\ldots,2d\}.
\end{equation}
Moreover, $\Gamma$ is attained: there exists $u\in \mathcal{H}_{0}$  and $j\in \{1,\ldots,2d  \}$ such that $\Gamma = u(e_j)$.
\end{prop}

\begin{proof}
That $\Gamma\geq 1$, follows immediately from the definition of $\Gamma$. The fact that $ \Gamma \leq \frac{1}{\lambda}$ follows from the local Harnack inequality, see Proposition~\ref{localH}. In order to prove \eqref{Gamma}, take any two arbitrary neighbouring points $a \sim b$ on $\Z^d$ and suppose that $b-a = e_j$ for some $j\in \{1,..,2d \} $.  If $u\in \mathcal{H}_0$ then it follows from Proposition~\ref{prop:perm-harm} that the function $v$ defined by
$$
v(x) = \frac{u(a + x)}{u(a)}
$$
also belongs to $\mathcal{H}_0$ and
$$
\frac{u(b)}{u(a)} = v(e_j),
$$
for some $j=1,\ldots,2d$. Since $a \sim b$ and are otherwise arbitrary, this establishes \eqref{Gammabasis}.

In order to prove that $\Gamma$ is attained, choose  sequences $(u_n) \subset\mathcal{H}_{0}$  and $(j_n) \subset \{1,\ldots,2d \}$ such that $\displaystyle u_n(e_{j_n}) > \Gamma - 1/n$ for each $n\in \N$. From the Pigeonhole Principle, there exists $j\in \{1,\ldots,2d\} $ so that $j_n = j$ for infinitely many $n$'s so we can assume, without loss of generality, that $j_n = j$ for all $n$. By Proposition \ref{compact}, there exist a subsequence $\{ u_{n_k} \}$ and $u\in \mathcal{H}_{0}$ such that $\{ u_{n_k} \} \to u$ pointwise on $\Z^d$. Then $u(e_j) \geq \Gamma$ so, by definition of $\Gamma$, we must have $u(e_j ) = \Gamma$.
\end{proof}

The following lemma is the key ingredient in the proof of Theorem~\ref{thm:main}. It says that, given an averaging operator $F: \R^{2d} \to \R$,  there exists a  positive $F$-harmonic function on $\Z^d$ which is also a multiplicative character of the form \eqref{charact}.

\begin{lem}\label{lemmain}
Let $F: \R^{2d} \to \R$ be an averaging operator. Then there exist $u \in \mathcal{H}_0$ and $A_1,\ldots,A_d >0$  such that
\begin{equation}\label{mainlem}
u(x_1,\ldots,x_d) = A_1^{ x_1} \cdots A_d^{x_d}
\end{equation}
for each $(x_1,\ldots,x_d) \in \Z^d$.  Furthermore, there is $i\in \{1,\ldots, d \} $ such that, either $A_i = \Gamma $ or $ A_i = \Gamma^{-1}$, where $\Gamma$ is as in \eqref{Gamma}.
\end{lem}

\begin{proof}
By Proposition \ref{Gamma}  we know that $\Gamma_1:=\Gamma \geq 1$  and that
there exist $u_1\in\mathcal{H}_{0}$ and $j_1 \in \{1,\ldots,  2d \}$ such that $u_1(e_{j_1}) = \Gamma_1$. Then we may write
$$
\Gamma_1 = \frac{u_1(e_{j_1})}{u_1(0)} = \frac{F\big(u_1(e_{j_1} +e_1),\ldots, u_1(e_{j_1}+e_{2d})\big )}{F \big (u_1(e_1),\ldots,u_1(e_{2d}) \big )}.
$$
Put $t_j := u_1(e_{j_1
} + e_j)$, $t_j' := u_1(e_j)$ for $j=1,\ldots, 2d$ and observe that
$$
\Gamma_1^{-1} \leq \frac{t_j}{t_j'} \leq \Gamma_1.
$$
for all $j =1,\ldots, 2d$. Therefore
$$
F(t_1,\ldots,t_{2d}) = \Gamma_1 F(t_1',\ldots,t_{2d}') = F(\Gamma_1 t_1',\ldots,\Gamma_1 t_{2d}')
$$
and it follows from Remark~\ref{compar} that $t_j = \Gamma_1 t_j'$ or, equivalently, $u_1(e_{j_1} + e_j ) = \Gamma_1 u_1(e_j)$ for $j =1,\ldots, 2d$. In particular, $u_1(2e_{j_1}) = \Gamma_1 u_1(e_{j_1}) = \Gamma_1^2$ and $u_1(e_{j_1} - e_{j_1}) = 1 = \Gamma_1 u_1(-e_{j_1})$, so $u_1(-e_{j_1}) = \Gamma_1^{-1}$.

Now we can repeat the argument with base points $2e_{j_1}$, $e_{j_1}$:

$$
\Gamma_1 = \frac{u_1(2e_{j_1})}{u_1(e_{j_1})} = \frac{F\big(u_1(2e_{j_1} +e_1),\ldots, u_1(2e_{j_1}+e_{2d})\big )}{F \big (u_1(e_{j_1} +e_1),\ldots, u_1(e_{j_1} + e_{2d}) \big )}
$$
and, as above, we obtain that $u_1 (2e_{j_1} + e_j) = \Gamma_1 u_1 (e_{j_1} + e_j)$. In particular, $u_1(3e_{j_1}) = \Gamma_1 u_1(2e_{j_1}) = \Gamma_1 ^3$. Continuing in the same way we would obtain that $u_1(k e_{j_1}) = \Gamma_1^k $ for each $k\in \N$.
We can also apply the same argument in the opposite direction. Since
$$
\Gamma_1 = \frac{u_1(0)}{u_1(-e_{j_1})} = \frac{F\big(u_1(e_1),\ldots, u_1(e_{2d})\big )}{F \big (u_1(-e_{j_1} +e_1),\ldots, u_1(-e_{j_1} +e_{2d}) \big )}
$$
we may deduce, invoking again Remark~\ref{compar}, that $\displaystyle \Gamma_1 u_1(-e_{j_1} +e_j) = u_1(e_j) $. In particular, $ \Gamma_1^{-1} = u_1 (-e_{j_1} ) = \Gamma_1 u_1(-2e_{j_1})$ so $u_1(-2e_{j_1}) = \Gamma_1^{-2}$ and, analogously $u_1(-ke_{j_1}) = \Gamma_1^{-k}$ for each $k\in \N$.

If $j_1 \in \{1, \ldots, d  \}$ we deduce that $u_1 (0, \ldots, 0, x_{j_1},0, \ldots, 0) = \Gamma_1^{x_{j_1}}$ and if $j_1 \in \{d+1, \ldots, 2d \} $ then $u_1 (0, \ldots, 0, x_{j_1-d},0, \ldots, 0) = \Gamma_1 ^{- x_{j_1 -d}}$. Summarizing, if
$$
i_1 =
\begin{cases}
j_1 \, ,  & \, \text {if} \, \,  j_1 \in \{1, \ldots , d \} \\
j_1 -d \, , & \, \text{if} \, \, j_1 \in  \{d+1, \ldots , 2d \}
\end{cases}
$$
then
$$
u_1 (0, \ldots , 0, x_{i_1}, 0, \ldots , 0) = B_1^{x_{i_1}}
$$
for each $x_{i_1} \in \Z$, where $B_1 = \Gamma_1$ if $j_1 \leq d$ and $B_1 = \Gamma_1^{-1}$ if $j_1 > d$. So far we have restricted ourselves to the level $x_1= \cdots = x_{i_1 -1}= x_{i_1 +1} = \cdots = x_{2d} = 0 $ but we could perform the same argument at any other level and obtain that

\begin{equation}\label{iter1}
u_1 (x_1,\ldots, x_{2d}) = u_1 (x_1,\ldots, x_{i_1 -1},0,x_{i_1 +1} \ldots, x_d ) B_1^{x_{i_1}}.
\end{equation}

This is the first step of the iteration. Now we define $\mathcal{H}_1$ as the subclass of $\mathcal{H}_0$ consisting of those functions $u\in \mathcal{H}_0$ of the form \eqref{iter1} and let further
  $$
  \Gamma_2:=\sup\left\{ \frac{u(y)}{u(x)}: \,  u\in \mathcal{H}_1 \, , \, x\sim y \, , \, x_{i_1} = y_{i_1} = 0 \right\}.
  $$
Now we can argue as in the proof of Proposition~\ref{Gamma} to deduce that there exist  $u_2 \in \mathcal{H}_1$ and $j_2 \in \{1, \ldots , 2d\} \setminus \{i_1, i_1 +d \} $ so that $\Gamma_2 :=  u_2 (e_{j_2})$. Set
$$
i_2 =
\begin{cases}
j_2 \, ,  & \, \text {if} \, \,  j_2 \in \{1, \ldots , d \} \\
j_2 -d \, , & \, \text{if} \, \, j_1 \in  \{d+1, \ldots , 2d \}
\end{cases}
$$
and let $B_2 = \Gamma_2$ if $i_2 \leq d$ and $B_2 = \Gamma_2^{-1}$ if $i_2 >d$. Then, by repeating the argument above we obtain
\begin{equation}\label{iter2}
u_2 (x_1, \ldots, x_d)= u_2 (x_1, \ldots, 0,\ldots, 0, \ldots, x_d ) B_1 ^{x_{i_1}}B_2^{x_{i_2}}
\end{equation}
where the $0$'s in the right hand side of \eqref{iter2} are located  at entries $i_1, i_2$. This finishes the second step of the iteration. By continuing in the same way and, after $d$ steps, we find a permutation $\{ i_1 ,\ldots , i_d  \}$ of $\{ 1, \ldots , d \}$ and positive constants $B_1, \ldots, B_d$ so that
$$
u(x_1, \ldots, x_d ) = B_1^{x_{i_1}}\cdots B_d^{x_{i_d}}
$$
for each $x = (x_1,\ldots, x_d ) \in \Z^d$. Now, putting $A_{i_1}:= B_1, \ldots,  A_{i_d}:= B_d$ we finally get
$$
u(x_1,\ldots, x_d ) = A_1^{x_1}\cdots A_d^{x_d}
$$
and the lemma follows.
\end{proof}

\section{Proof of theorems \ref{thm:main} and \ref{prop:inftylap}}\label{sec:Liouv}

In this section we prove Theorem~\ref{thm:main}, the strong Liouville property for symmetric averaging operators. As we pointed out in the introduction, the theorem applies to some distinguished examples of averaging operators  presented in Section~\ref{sec:ex}, see Corollaries~\ref{cor6.2} and~\ref{cor6.1}. Finally, in Theorem~\ref{prop:inftylap} we show that the discrete $\infty$-Laplacian also satisfies the strong Liouville property, even though the proof of Theorem~\ref{thm:main} fails in that case. Moreover we add a comment about the non symmetric case and end the section with some questions.


\begin{proof}[Proof of Theorem~\ref{thm:main}]
Once the averaging operator $F$ has been fixed, it is enough to show that $\Gamma = 1$, where $\Gamma $ is as in \eqref{Gamma}.
By Lemma~\ref{lemmain} we obtain a positive $F$-harmonic function $u\in \mathcal{H}_0$ which is a multiplicative character given by~\eqref{mainlem}, where $A_1,\ldots ,A_d $ are positive and, either  $A_i = \Gamma $ or $ A_i = \Gamma^{-1}$ for some $i \in \{ 1, \ldots, d \}$. Therefore,
$$
1 = u(0) = F(u(e_1),\ldots, u (e_{2d})) = F(A_1,\ldots, A_d, A_1^{-1},\ldots, A_d^{-1} )
$$
Since $F$ is symmetric, we infer from  Proposition \ref{symme}, part (3) that $A_1=\cdots =A_d = 1$ and, in particular,  $\Gamma = 1$. This finishes the proof of the theorem.
\end{proof}


As for  discrete $\infty$-harmonic functions, recall that $F_\infty$ is not an averaging operator, since property i) in Definition ~\ref{def:avoper} and the observation in Remark~\ref{compar} both fail. Therefore the proof of Theorem~\ref{thm:main} cannot be directly applied. Nevertheless, as announced in the introduction, the strong Liouville property still holds, see the statement of Theorem~\ref{prop:inftylap}.

\begin{rem}\label{weak}
When checking in detail the proofs of Propositions \ref{prop:perm-harm}, \ref{localH}, \ref{prop:conv}, \ref{compact} and \ref{Gamma}
we observe that only the fact that $F$ is \textit{non decreasing} in each variable together with property ii) in Definition ~\ref{def:avoper} have been used. This implies in particular that Proposition~\ref{Gamma} also holds for the $F_{\infty}$- operator even though $F_{\infty}$ fails to satisfy property i) in Definition ~\ref{def:avoper}.
\end{rem}

\begin{proof}[Proof of Theorem~\ref{prop:inftylap}]

By Remark ~\ref{weak}, we know that Proposition~\ref{Gamma} holds for $F_\infty$ as well, providing a positive $\infty$-harmonic function  $u$ on $\Z^d$ such that $u(0,\ldots,0)=1$ and $\Gamma=u(e_j)$ for some $j\in \{ 1,\ldots, 2d \}$. Then, as in the proof of Theorem~\ref{thm:main}, it is enough to show that $\Gamma=1$. In order to do so, set $t_i:=u(e_i)$ for $i=1,\ldots, 2d$. Then $t_j = \Gamma$ and  $\Gamma^{-1}\leq t_i\leq \Gamma$ for $i=1,\ldots, 2d$. By the definition of the discrete $\infty$-Laplacian we have that
\[
 u(0,\ldots,0)=1=\frac12\left(\max_{1 \leq i \leq 2d} t_i + \min_{1 \leq i \leq 2d}t_i \right)\geq \frac12\left(\Gamma+\Gamma^{-1}\right),
\]
which implies that $\Gamma=1$.
\end{proof}

\begin{rem}\label{symmexa}
It is well known that symmetry is a necessary hypothesis for discrete Liouville-type results (see e.g. Theorem 7.1 in~\cite{S}). We remind the following simple example on $\Z$ showing that
 the strong Liouville property may fail for (non symmetric) weighted discrete Laplacians, even if $d=1$. Indeed, take $R>0$ and define
 $$
 F(t_1, t_2) = \frac{1}{R+1}t_1 + \frac{R}{R+1}t_2.
 $$
Then it is easy to check that $u(x) = R^x$ is $F$-harmonic and positive on $\Z$. Note that $F$ is an averaging operator but, unless $R=1$, it is not symmetric. $F$ is an example of a non symmetric weighted discrete Laplacian.
\end{rem}

{\bf Some questions}.
\begin{enumerate}
\item A limitation of our setting is that averaging operators do not take into account ``spatial dependence", that is, the function $F$ is fixed and the same for all  points $x\in \Z^d$. It would be desirable to obtain counterparts of our results for spatially dependent averaging operators.
\item It would also be desirable to obtain a general result enclosing both Theorems~\ref{thm:main} and~\ref{prop:inftylap}.
\end{enumerate}

\bigskip

\bigskip

{\small


{\bf Conflict of Interests} The authors declare that they have no conflict of interest.

{\bf Data Availability Statement} Data sharing not applicable to this article as no datasets were generated or analysed during the current study.}


\begin{thebibliography}{99}

\bibitem{AGG}
\newblock \textsc{T. Adamowicz, M. Gaczkowski, P. G\'orka},
\newblock \emph{Harmonic functions on metric measure spaces},
\newblock Rev. Mat. Complut. {\bf 32(1)} (2019), 141--186.

%
%

\bibitem{AL}
\newblock \textsc{A. Arroyo, J. G. Llorente},
\newblock \emph{A priori H\"older and Lipschitz regularity for generalized p-harmonious functions in metric measure spaces},
\newblock Nonlinear Anal. {\bf 168} (2018), 32--49.

\bibitem{AL2}
\newblock \textsc{A. Arroyo, J. G. Llorente},
\newblock \emph{$p$-harmonic functions by way of intrinsic mean value properties},
\newblock Adv. Calc. Var. {\bf 16(1)} (2023), 111--129.

\bibitem{ARY}
\newblock \textsc{V. \'Alvarez, J.M. Rodr\'iguez, D. Yakubovich},
\newblock \emph{Estimates for nonlinear harmonic ``measures'' on trees},
\newblock Michigan Math. J. {\bf 49} (2001), no.1, 47--64.

\bibitem{BEJ}
\newblock \textsc{E. N. Barron, L.C. Evans, R. Jensen}  
\newblock \emph{The infinity Laplacian, Aronsson's equation and their generalizations}
\newblock Trans. Amer. Math. Soc. {\bf 360(1)} (2008), 77--101.

\bibitem{BR}
\newblock \textsc{P. Blanc, J. D. Rossi},
\newblock \emph{Game theory and partial differential equations},
\newblock De Gruyter, Berlin (2019).

\bibitem{BBS1}
\newblock \textsc{A. Bj\"orn, J. Bj\"orn, N. Shanmugalingam},
\newblock \emph{The Liouville theorem for $p$-harmonic functions and quasiminimizers with finite energy},
\newblock Math. Z. {\bf 297(1-2)} (2021), 827--854.

\bibitem{BBS2}
\newblock \textsc{A. Bj\"orn, J. Bj\"orn, N. Shanmugalingam},
\newblock \emph{Classification of metric measure spaces and their ends using $p$-harmonic functions},
\newblock Ann. Fenn. Math. {\bf 47(2)} (2022), 1025--1052.


\bibitem{BLMS}
\newblock \textsc{L. Buhovsky, A. Logunov, E. Malinnikova, M. Sodin},
\newblock \emph{A discrete harmonic function bounded on a large portion of $\Z^2$ is constant},
\newblock Duke Math. J. {\bf 171(6)} (2022), 1349--1378.

\bibitem{C}
\newblock \textsc{J. Calder},
\newblock \emph{The game theoretic $p$-Laplacian and semi-supervised learning with few labels},
\newblock Nonlinearity {\bf 32} (2019), 301--330.

\bibitem{Ca}
\newblock \textsc{J. Capoulade},
\newblock \emph{Sur quelques propri\'et\'es des functions harmoniques et des functions pr\'eharmoniques},
\newblock Mathematica, {\bf 8} (1932), 146--151.

\bibitem{CD}
\newblock \textsc{G. Choquet, J. Deny},
\newblock \emph{Sur l'\'equation de convolution $\mu=\mu*\sigma$},
\newblock C. R. Acad. Sci. Paris {\bf 250} (1960), 799--801.

\bibitem{Del}
\newblock \textsc{T. Delmotte},
\newblock \emph{Harnack inequalities on graphs},
\newblock S\'eminaire de Th\'eorie Spectrale et G\'eom\'etrie, Vol. {\bf 16}, Ann\'ee 1997--1998, 217--228, S\'emin. Th\'eor. Spectr. G\'eom., {\bf 16}, Univ. Grenoble I, Saint-Martin-d'H\'eres, 1998.

\bibitem{Du}
\newblock \textsc{R. J. Duffin},
\newblock \emph{Discrete Potential Theory},
\newblock Duke Math. J. {\bf 20} (1953), 233--251.

\bibitem{DSW}
\newblock \textsc{J. L. Doob, J. L. Snell, R. E. Williamson},
\newblock \emph{Application of boundary theory to sums of independent random variables},
\newblock Contributions to probability and statistics. Stanford studies in Mathematics and Statistics, {\bf 2} (1960), 182--197.


\bibitem{DMR}
\newblock \textsc{L. M.  Del Pezzo, C.M. Mosquera, J. D. Rossi},
\newblock \emph{Estimates for nonlinear harmonic measures on trees},
\newblock Bull. Braz. Math. Soc. (N.S.) {\bf 45(3)} (2014), 405--432.

\bibitem{DSW}
\newblock \textsc{J.L. Doob, J.L. Snell, R.E.  Williamson},
\newblock \emph{Application of boundary theory to sums of independent random variables},
\newblock 1960 Contributions to probability and statistics pp. 182--197 Stanford Univ. Press, Stanford, Calif.

\bibitem{ETT}
\newblock \textsc{A. Elmoataz, M. Toutain, D. Tenbrinck},
\newblock \emph{On the $p$-Laplacian and $\infty$-Laplacian on graphs with applications in image and data processing},
\newblock SIAM J. Imaging Sci. {\bf 8(4)} (2015), 2412--2451.

\bibitem{GP}
\newblock \textsc{L. J. Grady, J. R. Polimeni},
\newblock \emph{Discrete Calculus}, \newblock Springer-Verlag, (2010).

\bibitem{HS1}
\newblock \textsc{I. Holopainen, P.M. Soardi},
\newblock \emph{$p$-harmonic functions on graphs and manifolds},
\newblock Manuscripta Math. {\bf 94(1)} (1997), 95--110.

\bibitem{HS2}
\newblock \textsc{I. Holopainen, P.M. Soardi},
\newblock{A strong Liouville theorem for $p$-harmonic functions on graphs},
\newblock Ann. Acad. Sci. Fenn. Math. {\bf 22(1)} (1997), 205--226.

\bibitem{HKM}
\newblock \textsc{J. Heinonen, T- Kilpel\"ainen, O. Martio}
\newblock \emph{Nonlinear potential theory of degenerate elliptic equations},
\newblock Dover Publications, (2006).


\bibitem{KLLW}
\newblock \textsc{R. Kaufman, J. G. Llorente, J.M. Wu},
\newblock \emph{Nonlinear harmonic measures on trees},
\newblock Ann. Acad. Sci. Fenn. (Math.), \textbf{28} (2003),
279--302.

\bibitem{LR}
\newblock \textsc{J. Le Roux},
\newblock \emph{Sur le probl\'eme de Dirichlet}, \newblock J. Math\'ematiques Pures et Apliqu\'ees, {\bf 10} (1914), 189--230.


\bibitem{LGA}
\newblock \textsc{E. Le Gruyer, J. C. Archer},
\newblock \emph{Harmonious extensions},
\newblock SIAM J. Math. Anal. {\bf 29(1)} (1998), 279--292.

\bibitem{LMW}
\newblock \textsc{J. G. Llorente, J. J. Manfredi, J. M. Wu},
\newblock \emph{$p$-harmonic measure is not additive on null sets},
\newblock Ann. Sc. Norm. Super. Pisa Cl. Sci. {\bf (5)4} (2005), no.2, 357--373.


\bibitem{MOS}
\newblock \textsc{J. J. Manfredi, A. M. Oberman, A. P. Sviridov},
\newblock \emph{Nonlinear elliptic partial differential equations and $p$-harmonic functions on graphs},
\newblock Differenrial and  Integral Equations {\bf 28(1-2)} (2015), 79--102.

\bibitem{MPR2} \textsc{J. J. Manfredi, M. Parviainen, J.D. Rossi},
\newblock \emph{An asymptotic mean value characterization for p-harmonic functions},
\newblock Proc. Amer. Math. Soc. {\bf 138(3)} (2010), 881--889.

\bibitem{MPR} \textsc{J. J. Manfredi, M. Parviainen, J.D. Rossi},
\newblock \emph{On the definition and properties of p-harmonious functions},
\newblock Ann. Sc. Norm. Super. Pisa Cl. Sci. (5) {\bf 11(2)} (2012), 215--241.

\bibitem{M}
\newblock \textsc{G.A. Margulis},
\newblock \emph{Positive harmonic functions on nilpotent groups},
\newblock Dokl. Akad. Nauk SSSR {\bf 166} 1054--1057 (Russian); translated as
Soviet Math. Dokl. {\bf 7} 1966 241--244.

\bibitem{MRT}
\newblock \textsc{J.M. Maz\'on, J.D. Rossi, J. Toledo}
\newblock \emph{On the best Lipschitz extension problem for a discrete distance and the discrete $\infty$-Laplacian}
\newblock J. Math. Pures Appl. (9) {\bf 97(2)} (2012), 98--119.

\bibitem{PSSW}
\newblock \textsc{Y. Peres, O. Schramm, S. Sheffield, D. B. Wilson},
\newblock \emph{Tug-of-war and the infinite Laplacian},
\newblock J. Amer. Math. Soc. {\bf 22} (2009), 167--210.

\bibitem{PS}
\newblock \textsc{Y. Peres, S. Sheffield},
\newblock \emph{Tug-of-war with noise: a game theoretic view of the $p$-Laplacian},
\newblock Duke Math. J. {\bf 145(1)} (2008), 91--120.

\bibitem{S}
\newblock \textsc{S. Sawyer},
\newblock \emph{Martin boundaries and random walks},
\newblock Contemp. Math. {\bf 206}, Amer. Math. Soc. (1997), 17--44.


\bibitem{T}
\newblock \textsc{V. Totik},
\newblock \emph{The Mean Value Property},
\newblock Math. Intelligencer, \textbf{25} (2015), 9--16.


\end{thebibliography}
\end{document}